\newtheorem{theorem}{Theorem}[section]
\newtheorem{lemma}[theorem]{Lemma}
\newtheorem{corollary}[theorem]{Corollary}
\newcommand{\id}{\mbox{id}}
\newcommand{\Sym}{\operatorname{Sym}}
\newenvironment{proof}{\par\noindent{\bf Proof.}}{$\qed$\par\bigskip}
\newcommand{\qed}{\enspace\vrule  height6pt  width4pt  depth2pt}
\begin{document}

\title{Retractability of set theoretic solutions of the
  Yang-Baxter equation
  \thanks{
Research  partially supported by grants of MICIN-FEDER (Spain)
MTM2008-06201-C02-01, Generalitat de Catalunya 2005SGR00206,
Onderzoeksraad of Vrije Universiteit Brussel, Fonds voor
Wetenschappelijk Onderzoek (Belgium), Flemish-Polish bilateral
agreement BIL2005/VUB/06 and MNiSW research grant N201 004 32/0088
(Poland).\newline  2000 Mathematical Subject Classification: Primary
81R50, Secondary 20B25, 20F38, 20B35, 20F16, 20F29.\newline
Keywords: Yang Baxter equation, set theoretic solution,
multipermutation solution, permutation group, group of $I$-type.}}

\author{Ferran Ced\'o \and Eric Jespers \and Jan Okni\'nski}
\date{}
\maketitle

\begin{abstract}

It is shown that square free set theoretic involutive
non-de\-ge\-nerate  solutions of the Yang-Baxter equation whose
associated permutation group (referred to as  an involutive
Yang-Baxter group) is abelian are retractable in the sense of
Etingof, Schedler and Soloviev. This solves a problem of
Gateva-Ivanova in the case of abelian IYB groups. It also implies
that the corresponding finitely presented abelian-by-finite groups
(called the structure groups) are poly-${\mathbb Z}$ groups.
Secondly, an example of a solution with an abelian involutive
Yang-Baxter group which is not a generalized twisted union is
constructed. This answers  in the negative another problem of
Gateva-Ivanova.  The constructed solution is of multipermutation
level $3$. Retractability of solutions is also proved in the case
where the natural generators of the IYB group are cyclic
permutations. Moreover, it is shown that such solutions are
generalized twisted unions.
\end{abstract}

\section{Introduction}

In order to find new solutions of the Yang-Baxter equation,
Drinfeld, in \cite{drinfeld}, posed the question of finding the
simplest possible solutions, the so called set theoretic solutions
on a finite set $X$ (see precise definition below). There are many
papers in this area and with many links to different topics. For a
detailed background and references we refer the reader to
\cite{eting,gat-sol,gateva-set,gateva-matched,gateva-quantum,michel}.
We mention a few highlights. Etingof, Schedler and Soloviev
\cite{eting} and independently Gateva-Ivanova and Van den Bergh
\cite{michel} gave a group theoretical interpretation of the set
theoretic involutive non-degenerate solutions of the Yang-Baxter
equation. In the latter paper it was then shown that the associated
group and semigroup algebras share many homological properties with
commutative polynomial algebras in finitely many variables, see also
\cite{jes-okn-Itype}. Rump in \cite{rump} proved that if such
solutions are square free  (on a set with more than one element)
then they are decomposable, hence confirming a conjecture of
Gateva-Ivanova. A detailed account on these aspects can be found in
\cite{bookspringer}. Very recent papers
\cite{gateva-set,gateva-matched,gateva-quantum} focus on  various
specific constructions of set theoretic solutions, already
introduced in \cite{eting}.  The aim is to show that many solutions
can be built recursively from solutions constructed on smaller sets.

We now first  recall the precise definitions, notations and
results needed to clearly state the problems tackled in this
paper.

 Let $X=\{ x_1,x_2,\dots ,x_n\}$, with $n>1$. Recall that a
set theoretic involutive  non-degenerate solution of the Yang-Baxter
equation on $X$ is a pair $(X,r)$, where $r$ is a map $r\colon
X^2\longrightarrow X^2$ such that:
\begin{itemize}
\item[(1)] $r^2=\mathrm{id}_{X^2}$;
\item[(2)] for $i,j\in\{ 1,\dots ,n\}$ there exist unique $k,l\in
\{ 1,\dots ,n\}$ such that
$r(x_i,x_k)=(x_j,x_l)$;
\item[(3)] $r_{12}\circ r_{23}\circ r_{12}=r_{23}\circ r_{12}\circ
r_{23}$, where $r_{ij}\colon X^3\longrightarrow X^3$ is the map
acting as $r$ on the $(i,j)$ components (in this order) and as the
identity on the remaining component.
\end{itemize}
Such a solution $(X,r)$ is called square free if $r(x_{i},x_{i})
=(x_{i},x_{i})$ for every $i$. Condition (2) implies that the maps
$\sigma_i, \gamma_{i}\colon X\longrightarrow X$ defined by
$r(x_i,x_k)=(x_{\sigma_i(k)},x_{\gamma_{k}(i)})$ are bijective.
Denote by $G_r$ the subgroup $\langle\sigma_1,\dots
,\sigma_n\rangle$ of the symmetric group $\Sym_{n}$. Following
\cite{CJR}, we call the group $G_r$ the involutive Yang-Baxter
group (IYB group) associated to the solution $(X,r)$. Note also
that $G_{r}=\langle \gamma_{1},\ldots ,\gamma_{n}\rangle$ (see for
example \cite{CJR}). It is known that $G_r$ is solvable (see
\cite[Theorem 2.15]{eting}). Conversely, it remains an open
problem to decide which solvable finite groups are IYB groups. In
\cite{CJR} this has been proved for several classes of groups, in
particular, nilpotent finite groups of class $2$ and thus for
abelian finite groups.

Let $G(X,r)$ be the group defined by the presentation
$$\langle x_{1},\ldots, x_{n} \mid x_{i}x_{j}=
x_{k}x_{l} \mbox{ if } r(x_{i},x_{j})=(x_{k},x_{l})\rangle .$$ This
group is called the structure group of the solution $(X,r)$, or the
group of $I$-type associated to the solution $(X,r)$,
\cite{michel,bookspringer}. It is known that this group is
isomorphic with the subgroup of the semidirect product
$Fa_{n}\rtimes G_{r}$  of the free abelian group $Fa_{n}=\langle
u_{1},\ldots ,u_{n}\rangle$ of rank $n$, with $G_{r}$ acting by
$\sigma_{i}(u_{j})=u_{\sigma_{i}(j)}$, generated by the set $\{
(u_{i},\sigma_{i} ) \mid i=1,\ldots ,n\}$. It is known that,
identifying $x_{i}$ with $(u_{i},\sigma_{i})$, $$G(X,r)= \{
(a,\sigma_{a}) \mid a\in Fa_{n} \} \subseteq Fa_{n}\rtimes G_{r},$$
where $a\mapsto {\sigma_{a}}$ is a mapping from $Fa_{n}$ to $G_{r}$
and $\sigma_{i}=\sigma_{u_{i}}$.

In particular,  $G(X,r)$ is a solvable abelian-by-finite group. It
is also a torsion free group (\cite[Corollary 1.4]{michel} or
\cite[Corollary~8.2.7]{bookspringer}). The earlier mentioned
homological properties (see \cite{michel}) of the group algebra
$K[G(X,r)]$ then yield that this algebra is a domain (this also
follows from a result of Brown, see \cite[Theorem 13.4.1]{pasbook}).

The relation $\sim $ on the set $X$, introduced in \cite{eting},
and defined by $x_{i}\sim x_{j}$ if $\sigma_{i}=\sigma_{j}$ is
called the retract relation on $X$. There is a natural induced
solution $Ret(X,r)=(X/\!\sim, \tilde{r})$, and it is called the
retraction of $X$. A solution $(X,r)$ is called a multipermutation
solution of level $m$ if $m$ is the smallest nonnegative integer
such that the solution $Ret^{m}(X,r)$ has cardinality $1$. Here we
define $Ret^{k}(X,r)=Ret(Ret^{k-1}(X,r))$ for $k>1$. If such an
$m$ exists then one also says that the solution is retractable. In
this case, the group $G(X,r)$ is a poly-$\mathbb Z$ group (see
\cite[Proposition 8.2.12]{bookspringer}). For such groups, this of
course then gives a direct proof for the fact that $K[G(X,r)]$ is
a domain.

Note that there is a natural action of $G_r$ on $X$ defined by
$\sigma (x_i)=x_{\sigma (i)}$.

A set theoretic involutive non-degenerate solution $(X,r)$ is
called a generalized twisted union of solutions $(Y,r_{Y})$ and
$(Z,r_{Z})$ if $X$ is a disjoint union of two $G_r$-invariant
non-empty subsets $Y,Z$ such that for all $z,z'\in Z,y,y'\in Y$ we
have
\begin{eqnarray}\label{twisted1}\sigma_{\gamma_{y}(z)\mid Y} =
\sigma_{\gamma_{y'}(z)\mid Y}\end{eqnarray}
\begin{eqnarray}\label{twisted2}\gamma_{\sigma_{z}(y)\mid Z} = \gamma_{\sigma_{z'}(y)\mid
Z}.\end{eqnarray} Here, to simplify notation, we write
$\sigma_{x}$ for $\sigma_{i}$ if $x=x_{i}$, and similarly for all
$\gamma_{i}$.

 If, moreover, $(X,r)$ is a square free solution,  then  conditions
(\ref{twisted1}) and (\ref{twisted2}) are equivalent to
\begin{eqnarray}\label{twisted3}\sigma_{\sigma_{y}(z)\mid Y} =
\sigma_{z\mid Y}\end{eqnarray}
\begin{eqnarray}\label{twisted4}\sigma_{\sigma_{z}(y)\mid Z} = \sigma_{y\mid
Z},\end{eqnarray} (see \cite[Proposition 8.3]{gat-sol} and its
proof).

The following conjectures were formulated by Gateva-Ivanova in
\cite{gat-sol}.

\begin{enumerate}
\item[I)] Every set theoretic involutive non-degenerate
square free solution $(X,r)$ of cardinality $n\geq 2$ is a
multipermutation solution of level $m<n$.
\item[II)] Every multipermutation square free solution of cardinality $n\geq 2$
is a generalized
twisted union.
\end{enumerate}

In Section~\ref{Sect2}  we show that Conjecture I) is true for
solutions with an abelian involutive Yang-Baxter group $G_{r}$.
Actually, we prove more. Namely, that every such solution is
retractable in a stronger sense, obtained by refining the relation
$\sim$ on $X$ by requesting additionally that the elements are in
the same $G_{r}$-orbit on $X$. It follows that the corresponding
structure groups $G(X,r)$ are poly-${\mathbb Z}$ groups. Notice that
this is not true in the case of non square free solutions, as shown
in \cite[Example~8.2.14]{bookspringer}.

In Section~\ref{Sect3} we give an example of a
multipermutation solution of level $3$ with
abelian involutive Yang-Baxter group that is not
a generalized twisted union.  Therefore
Conjecture II) does not hold.

Finally, in Section~\ref{Sect4}, we show that if
every generator $\sigma_{i}$ of the IYB group
$G_{r}$ is a cyclic permutation, then the
corresponding solution $(X,r)$ also is
retractable.  Moreover, we prove that  such
solutions are generalized twisted unions,
provided that  $|X|\geq 2$. As this assumption on
$G_{r}$ does not imply that the group $G_{r}$ is
abelian, this provides another class of solutions
for which Conjecture I) is confirmed.

\section{Solutions with an abelian IYB group} \label{Sect2}

In this section we confirm Conjecture I) for  set theoretic
involutive non-degenerate square free solutions $(X,r)$ with an
associated abelian IYB group.

We will often use the following consequence of
\cite[Theorem~8.1.4, Corollary~8.2.4 and
Theorem~9.3.10]{bookspringer}.

\begin{lemma}  \label{permutat}
Assume that $(X,r)$ is  a set theoretic involutive non-degenerate
square free solution. If $r(x_{i},x_{j})=(x_{k},x_{l})$ for some
$i,j,k,l$ then $\sigma_{i}\circ \sigma_{j}=\sigma_{k}\circ
\sigma_{l}$ in $\Sym_{n}$.
\end{lemma}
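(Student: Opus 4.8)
The plan is to work inside the faithful description of the structure group $G(X,r)$ as a subgroup of $Fa_n \rtimes G_r$, exploiting the fact that the defining relations $x_ix_j = x_kx_l$ of $G(X,r)$ correspond exactly to the equalities $r(x_i,x_j)=(x_k,x_l)$. So I would start by translating the hypothesis $r(x_i,x_j)=(x_k,x_l)$ into the identity $x_ix_j=x_kx_l$ holding in $G(X,r)$, and then use the embedding $x_i \mapsto (u_i,\sigma_i)$ into the semidirect product.

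Explicitly, under the embedding we have $x_ix_j = (u_i,\sigma_i)(u_j,\sigma_j) = (u_i + \sigma_i(u_j),\, \sigma_i\sigma_j)$, and likewise $x_kx_l = (u_k + \sigma_k(u_l),\, \sigma_k\sigma_l)$, where I am writing $Fa_n$ additively. Since these two elements are equal in $Fa_n\rtimes G_r$, comparing the second coordinates immediately yields $\sigma_i\sigma_j = \sigma_k\sigma_l$ in $G_r \leq \Sym_n$, which is exactly the desired conclusion. Comparing the first coordinates gives the extra information $u_i + \sigma_i(u_j) = u_k + \sigma_k(u_l)$, i.e.\ $u_i + u_{\sigma_i(j)} = u_k + u_{\sigma_k(l)}$; since the $u_m$ form a free abelian basis this forces $\{i,\sigma_i(j)\} = \{k,\sigma_k(l)\}$ as multisets, but that refinement is not needed for the statement.

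The main point to verify carefully is that the relation $r(x_i,x_j)=(x_k,x_l)$ really does give $x_ix_j=x_kx_l$ as a genuine equality in the concrete group $\{(a,\sigma_a) : a\in Fa_n\}$, and not merely in the abstract presentation; this is precisely the content of the isomorphism between $G(X,r)$ and that subgroup, which is quoted in the excerpt and may be invoked. The one subtlety I would flag is the direction of the action and the convention for $r$: one must make sure that the first component of $r(x_i,x_j)$ indexes the first factor of the product $x_ix_j$ in the chosen multiplication convention, so that the defining relation is $x_ix_j = x_kx_l$ rather than $x_ix_j = x_lx_k$. Once the conventions are fixed to match those of \cite{bookspringer} (Theorem~8.1.4, Corollary~8.2.4 and Theorem~9.3.10), the proof reduces to reading off the second coordinate, so there is really no hard obstacle here — the statement is essentially a projection onto the $G_r$-component of a single relation. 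Note also that square-freeness of $(X,r)$ is what guarantees we are in the setting where the cited results of \cite{bookspringer} apply.
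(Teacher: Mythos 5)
Your argument is correct and is essentially the route the paper itself intends: the paper states this lemma without proof, as a consequence of the cited structure results on groups of $I$-type, and your derivation (translate $r(x_i,x_j)=(x_k,x_l)$ into the defining relation $x_ix_j=x_kx_l$, pass through the faithful identification $x_i\mapsto(u_i,\sigma_i)$ in $Fa_n\rtimes G_r$, and read off the second coordinate) is exactly the standard way to extract it. The only quibble is your last sentence: square-freeness is not what makes the embedding available (the isomorphism $G(X,r)\cong\{(a,\sigma_a)\}$ holds for all involutive non-degenerate solutions), so that hypothesis is merely the paper's standing assumption rather than a needed ingredient here.
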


By \cite[Theorem 1]{rump}, if $n=|X|>1$ then the number $m$ of
orbits in $X$ under the action of $G_r$ is greater than $1$.  Let
$X_{1},\ldots ,X_{m}$ denote these orbits.

\begin{lemma}\label{moves}
Suppose that $G_r$ is abelian. Let $i,j\in\{ 1,\dots ,n\}$ be such
that $\sigma_i(j)=j$. If $x_j\in X_k$, then $\sigma_i(l)=l$ for
all $x_l\in X_k$.
\end{lemma}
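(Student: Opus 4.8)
The plan is to exploit the fact that, since $G_r$ is abelian, the permutation $\sigma_i$ lies in the center of $G_r$; consequently its set of fixed points is invariant under the whole action of $G_r$ on $X$, and since $X_k$ is a single $G_r$-orbit containing the fixed point $x_j$, the entire orbit must consist of fixed points of $\sigma_i$.

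Concretely, I would argue as follows. Recall that $X_k$ is an orbit for the action $\sigma(x_t)=x_{\sigma(t)}$ of $G_r$, so any $x_l\in X_k$ can be written as $x_l=g(x_j)$ for some $g\in G_r$; equivalently $l=g(j)$. Since $G_r$ is abelian, $\sigma_i g=g\sigma_i$, and using the hypothesis $\sigma_i(j)=j$ one computes
$$\sigma_i(l)=\sigma_i(g(j))=g(\sigma_i(j))=g(j)=l.$$
As $x_l$ was an arbitrary element of $X_k$, this proves the claim. One may phrase the same argument inductively along a word in the generators $\sigma_1,\dots,\sigma_n$ joining $x_j$ to $x_l$ inside the orbit, the single inductive step being $\sigma_i(\sigma_t(a))=\sigma_t(\sigma_i(a))=\sigma_t(a)$ whenever $\sigma_i(a)=a$; but invoking the group element $g$ directly is shorter.

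I expect no serious obstacle here: the only point worth isolating is the observation that commutativity of $G_r$ forces the fixed-point set of a generator to be a union of $G_r$-orbits. Note in particular that neither the square free hypothesis nor Lemma~\ref{permutat} is needed for this step; abelianness of $G_r$ alone carries the argument. The role of this lemma will presumably be to control how a fixing permutation behaves across a whole orbit in the subsequent retractability arguments.
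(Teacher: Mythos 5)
Your proof is correct and is essentially identical to the paper's: the paper writes $l=\sigma_{i_1}\cdots\sigma_{i_t}(j)$ and commutes $\sigma_i$ past this product of generators, which is exactly your argument with $g$ spelled out as a word in the $\sigma$'s. Nothing further is needed.
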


\begin{proof}
Let $x_l\in X_k$. Then there exist $i_1,\dots ,i_t\in \{1,\dots
,n\}$ such that $\sigma_{i_1}\dots\sigma_{i_t}(j)=l$. Since $G_r$ is
abelian, we have
\begin{eqnarray*}
\sigma_i(l)&=&\sigma_i\sigma_{i_1}\dots\sigma_{i_t}(j)\\
&=&\sigma_{i_1}\dots\sigma_{i_t}\sigma_i(j)\\
&=&\sigma_{i_1}\dots\sigma_{i_t}(j)=l.
\end{eqnarray*}
\end{proof}

\begin{corollary}\label{identity}
Suppose that $G_r$ is abelian. Then for all $k$ and for all
$x_i\in X_k$,
$$\sigma_i |_{X_k}=\mathrm{id}_{X_k}.$$
\end{corollary}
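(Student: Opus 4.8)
The plan is to prove that for each orbit $X_k$ and each $x_i \in X_k$, the permutation $\sigma_i$ fixes every point of $X_k$ pointwise. The key observation is that square-freeness gives us, for free, at least one fixed point of $\sigma_i$ inside $X_k$: since $(X,r)$ is square free we have $r(x_i,x_i)=(x_i,x_i)$, which by definition of $\sigma_i$ (via $r(x_i,x_k)=(x_{\sigma_i(k)},x_{\gamma_k(i)})$) forces $\sigma_i(i)=i$. So whenever $x_i \in X_k$, the index $i$ is itself a point of $X_k$ fixed by $\sigma_i$.

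From here the corollary is essentially an immediate application of Lemma~\ref{moves}. First I would fix $k$ and $x_i \in X_k$, and record the fixed point $\sigma_i(i)=i$ coming from square-freeness. Since $x_i \in X_k$, I can invoke Lemma~\ref{moves} with the roles $j := i$ (so that $\sigma_i(j)=j$ holds and $x_j = x_i \in X_k$): the lemma then yields $\sigma_i(l)=l$ for all $x_l \in X_k$. This is exactly the statement $\sigma_i|_{X_k} = \mathrm{id}_{X_k}$, and it holds for every choice of $x_i \in X_k$ and every $k$, which is the full claim.

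The only genuine content to verify is the square-freeness step, namely that $\sigma_i(i)=i$. I expect this to be the main (and only real) obstacle, though it is a short one: one must unwind the definition $r(x_i,x_j)=(x_{\sigma_i(j)},x_{\gamma_j(i)})$ at $j=i$ and compare with the square-free condition $r(x_i,x_i)=(x_i,x_i)$, reading off $x_{\sigma_i(i)}=x_i$ and hence $\sigma_i(i)=i$. Everything else is bookkeeping: the hypothesis that $G_r$ is abelian is already absorbed into Lemma~\ref{moves}, so no further commutativity argument is needed here. The whole proof therefore reduces to checking the single diagonal fixed point and quoting the preceding lemma.
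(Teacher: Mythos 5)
Your proof is correct and follows exactly the paper's argument: square-freeness gives the diagonal fixed point $\sigma_i(i)=i$, and then Lemma~\ref{moves} applied with $j=i$ immediately yields $\sigma_i|_{X_k}=\mathrm{id}_{X_k}$. The extra unwinding of the definition of $r$ to justify $\sigma_i(i)=i$ is a harmless elaboration of what the paper states in one line.
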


\begin{proof}
Since $r$ is square free, $\sigma_i(i)=i$. Thus the result follows
by Lemma~\ref{moves}.
\end{proof}

\begin{lemma}\label{key}
Let $k\in \{ 1,\dots ,n\}$. Suppose that for all $x_i\in X_k$ we
have that $\sigma_i  |_{X_k}=\mathrm{id}_{X_k}$. Let
$x_{i_1},\dots ,x_{i_s}\in X_k$ and $x_{j_1}\in X$.  If
$j_2=\sigma_{i_1}\dots\sigma_{i_s}(j_1)$, then $\sigma_{j_1}
|_{X_k}=\sigma_{j_2} |_{X_k}$.
\end{lemma}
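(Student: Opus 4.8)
The plan is to reduce to a single application of a generator and then invoke the permutation identity of Lemma~\ref{permutat}. First I would reduce to the case $s=1$. Setting $a_0=j_1$ and $a_t=\sigma_{i_{s-t+1}}(a_{t-1})$ for $t=1,\dots,s$, one has $a_s=\sigma_{i_1}\cdots\sigma_{i_s}(j_1)=j_2$, and each step applies a single generator $\sigma_{i_{s-t+1}}$ with $x_{i_{s-t+1}}\in X_k$. Chaining the single-step statements $\sigma_{a_{t-1}}|_{X_k}=\sigma_{a_t}|_{X_k}$ then yields the general claim. So it suffices to prove: if $x_i\in X_k$ and $j_2=\sigma_i(j_1)$, then $\sigma_{j_1}|_{X_k}=\sigma_{j_2}|_{X_k}$.

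For this single-step case I would write $r(x_i,x_{j_1})=(x_{\sigma_i(j_1)},x_{\gamma_{j_1}(i)})=(x_{j_2},x_{\gamma_{j_1}(i)})$ and apply Lemma~\ref{permutat}, which gives the identity $\sigma_i\circ\sigma_{j_1}=\sigma_{j_2}\circ\sigma_{\gamma_{j_1}(i)}$ in $\Sym_n$. The idea is then to restrict both sides to $X_k$ and exploit the hypothesis that every $\sigma$ whose index lies in $X_k$ acts as the identity on $X_k$.

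The key observation, and the point that must be got right, is that \emph{both} indices $i$ and $\gamma_{j_1}(i)$ satisfy the hypothesis, i.e. $x_i\in X_k$ and $x_{\gamma_{j_1}(i)}\in X_k$. For the latter I would use that $\gamma_{j_1}\in G_r$, since $G_r=\langle\gamma_1,\dots,\gamma_n\rangle$, together with the fact that $X_k$, being a $G_r$-orbit, is $G_r$-invariant; hence $\gamma_{j_1}(x_i)=x_{\gamma_{j_1}(i)}\in X_k$. The same invariance shows that $\sigma_{j_1}$ and $\sigma_{j_2}$ restrict to permutations of $X_k$, so the restricted maps are well defined. This orbit argument is the only real obstacle; everything else is a mechanical restriction computation, and notably no use is made of $G_r$ being abelian.

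Finally I would evaluate the identity on an arbitrary $x\in X_k$. On the left, $\sigma_{j_1}(x)\in X_k$ and $\sigma_i|_{X_k}=\mathrm{id}_{X_k}$, so the left side returns $\sigma_{j_1}(x)$; on the right, $\sigma_{\gamma_{j_1}(i)}|_{X_k}=\mathrm{id}_{X_k}$, so it returns $\sigma_{j_2}(x)$. Comparing gives $\sigma_{j_1}(x)=\sigma_{j_2}(x)$ for every $x\in X_k$, that is, $\sigma_{j_1}|_{X_k}=\sigma_{j_2}|_{X_k}$, as required.
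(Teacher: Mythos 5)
Your proof is correct and follows essentially the same route as the paper: reduce to $s=1$, apply Lemma~\ref{permutat} to $r(x_{i},x_{j_1})=(x_{j_2},x_{\gamma_{j_1}(i)})$, check that the second output index lies in $X_k$, and evaluate the resulting identity on $X_k$. The only (immaterial) difference is that you justify $x_{\gamma_{j_1}(i)}\in X_k$ via $\gamma_{j_1}\in G_r$ and $G_r$-invariance of the orbit, whereas the paper uses involutivity of $r$ to write this index as $\sigma_{j_2}^{-1}(i)$; both are valid.
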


\begin{proof}
Clearly we may assume that $s=1$. Thus
$j_2=\sigma_{i_1}(j_1)$. Then there exists
$x_{i}\in X$ such that
$r(x_{i_1},x_{j_1})=(x_{j_2},x_i)$. By
Lemma~\ref{permutat}, we have that
$\sigma_{i_1}\sigma_{j_1}=\sigma_{j_2}\sigma_{i}$.
Let $x_j\in X_k$. Since $x_{\sigma_{j_1}(j)}\in
X_k$, it follows that
$\sigma_{i_1}(\sigma_{j_1}(j))=\sigma_{j_1}(j)$.
Since $x_{i}=x_{\sigma_{j_2}^{-1}(i_1)}\in X_k$,
we have that
$\sigma_{j_2}\sigma_i(j)=\sigma_{j_2}(j)$.
Therefore $\sigma_{j_1}(j)=\sigma_{j_2}(j)$ and
thus $\sigma_{j_1}|_{X_k}=\sigma_{j_2}|_{X_k}$.
\end{proof}

We say that the solution $r$ is trivial if
$r(x_{i},x_{j})=(x_{j},x_{i})$ for every $i,j$. This is equivalent
to saying that $\sigma_{i}$ is the identity map for every $i$, and
also to the fact that $m=n$.

\begin{theorem}\label{abelian}
Assume that $(X,r)$ is a set theoretic involutive non-de\-ge\-nerate
square free solution and the group $G_r$ is abelian. If $r$ is not
trivial then there exist $i,j\in \{ 1,\dots ,n\}$ such that
$\sigma_i=\sigma_j$, $i\neq j$ and $x_{i},x_{j}\in X_{k}$ for some
$k\in \{ 1,\ldots,m\}.$
\end{theorem}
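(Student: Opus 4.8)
The plan is to manufacture, inside a single orbit, two distinct points carrying the same permutation, by controlling how $\sigma$ changes under one elementary move. Since $r$ is not trivial, some $\sigma_c$ is nontrivial, and by Corollary~\ref{identity} it must move a point of an orbit $X_k$ with $x_c\notin X_k$; I would pick $x_i\in X_k$ with $i'=\sigma_c(i)\neq i$, so that $x_{i'}\in X_k$ as well. Writing $r(x_c,x_i)=(x_{i'},x_e)$ and applying Lemma~\ref{permutat} gives $\sigma_c\sigma_i=\sigma_{i'}\sigma_e$; using $r^2=\mathrm{id}_{X^2}$ one identifies $e=\sigma_{i'}^{-1}(c)$, so that $x_c$ and $x_e$ lie in a common orbit $X_{k'}$. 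As $G_r$ is abelian this rearranges to $\sigma_{i'}\sigma_i^{-1}=\sigma_c\sigma_e^{-1}$, whence $\sigma_i=\sigma_{i'}$ \emph{if and only if} $\sigma_c=\sigma_e$. Thus a collision inside $X_k$ is exactly the same as a collision inside $X_{k'}$, and the subtlety is that this local step is self-referential: on its own it never forces equality.

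To break the circularity I would instead locate the collision by comparing $\sigma_i$ and $\sigma_{i'}$ orbit by orbit. By Corollary~\ref{identity} both $\sigma_i$ and $\sigma_{i'}$ restrict to the identity on $X_k$, so they can disagree only on orbits that are genuinely moved by elements of $X_k$. For any such orbit $X_l$, Lemma~\ref{key} shows that $\sigma_j\mid_{X_l}$ is constant along the orbits of the subgroup $\langle\sigma_a : x_a\in X_l\rangle$ acting on $X_k$; consequently, if $i'$ can be reached from $i$ by a product of generators taken from $X_l$, then $\sigma_i\mid_{X_l}=\sigma_{i'}\mid_{X_l}$. In particular, if $X_k$ moves only the single orbit $X_{k'}$ while $X_{k'}$ moves $X_k$ in return, then choosing $i,i'$ in a common $\langle\sigma_a:x_a\in X_{k'}\rangle$-orbit of size at least two yields agreement on $X_{k'}$ together with triviality on every other orbit, hence $\sigma_i=\sigma_{i'}$ with $x_i,x_{i'}\in X_k$, as required.

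The main obstacle is the general case, where an orbit $X_k$ may move several other orbits at once, so that one must exhibit a pair $x_i\neq x_{i'}$ in $X_k$ lying simultaneously in a common $\langle\sigma_a:x_a\in X_l\rangle$-orbit for \emph{every} orbit $X_l$ moved by $X_k$. To secure this I would argue by contradiction, assuming that on every orbit the map $x_i\mapsto\sigma_i$ is injective, and exploit the abelian hypothesis more heavily: since a transitive abelian group is regular, each $X_k$ is a regular $G_r/N_k$-set, where $N_k$ is the pointwise stabiliser of $X_k$, and Corollary~\ref{identity} forces $\{\sigma_i : x_i\in X_k\}\subseteq N_k$. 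Feeding these constraints into the transport identity above, summed around the cycles of a fixed mover $\sigma_c$ on $X_k$, where the commutativity makes the left-hand side telescope, should yield relations among the $\sigma_e$ that, under the injectivity assumption, force every $\sigma_c$ to act trivially, making $r$ trivial and giving the desired contradiction. Equivalently, one may pass to a minimal counterexample in $|X|$ and use the induced solution on a suitable union of orbits to reduce to the one-mover situation handled above. Making either of these global arguments airtight, rather than the local reduction, is where I expect the real work to lie.
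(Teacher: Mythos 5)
Your local analysis is sound and uses exactly the paper's toolkit: the identity $\sigma_c\sigma_i=\sigma_{i'}\sigma_e$ from Lemma~\ref{permutat}, the observation that $e=\sigma_{i'}^{-1}(c)$ lies in the orbit of $x_c$, and the orbit-by-orbit comparison of $\sigma_i$ and $\sigma_{i'}$ via Corollary~\ref{identity} and Lemma~\ref{key}. You also correctly isolate the crux: to force $\sigma_i=\sigma_{i'}$ for a pair $x_i\neq x_{i'}$ in one orbit, that pair must be connected by a word in generators coming from $X_l$ \emph{simultaneously for every orbit} $X_l$ on which the two permutations could disagree. But at precisely this point the proposal stops being a proof: ``summing around the cycles \dots should yield relations \dots that force every $\sigma_c$ to act trivially'' is a hope, not an argument, and you acknowledge as much. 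The telescoping you describe only reproduces the self-referential equivalence $\sigma_i=\sigma_{i'}\Leftrightarrow\sigma_c=\sigma_e$ that you yourself flagged as circular. The minimal-counterexample alternative also meets a concrete obstruction: restricting to a union $Y$ of orbits does give a solution, but a collision $\sigma_i|_Y=\sigma_j|_Y$ found there does not yield $\sigma_i=\sigma_j$ on all of $X$, so the reduction to the ``one-mover'' case does not go through as stated.

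What the paper does instead, and what is missing from your proposal, is an induction on the number of orbits that converts each failure of equality into progress. Assuming for contradiction that $x\mapsto\sigma_x$ is injective on every orbit, one builds a chain of \emph{distinct} orbits $X_{k_1},\dots,X_{k_j}$ together with a pair $x_{i_{1,j}}\neq x_{i_{2,j}}$ in $X_{k_j}$ such that $i_{2,j}$ is reachable from $i_{1,j}$ by a word in generators taken from $X_{k_p}$, for every $p<j$. By Corollary~\ref{identity} and Lemma~\ref{key} the two permutations then agree on $X_{k_1},\dots,X_{k_j}$; since by the injectivity assumption they are not equal, they must disagree at some point $i$ lying in a \emph{new} orbit $X_{k_{j+1}}$, and the pair $\sigma_{i_{1,j}}(i)\neq\sigma_{i_{2,j}}(i)$ in $X_{k_{j+1}}$ inherits the reachability property for all previously visited orbits --- this is where the transport identity of Lemma~\ref{permutat}, iterated along the connecting word and combined with commutativity of $G_r$, does the real work. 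After $m$ steps all orbits are exhausted, the two permutations agree everywhere, and injectivity is contradicted. This ``each disagreement points to a fresh orbit'' mechanism is the global idea your sketch lacks, so the proposal as written has a genuine gap.
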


\begin{proof}
Suppose the assertion does not hold. So, for every $k$, if
$x_{i},x_{j}\in X_{k}$ are distinct then $\sigma_{i}\neq
\sigma_{j}$. Note that if $|X_q|=1$ for all $q\in \{ 1, \dots
,m\}$, then $\sigma_1=\sigma_2=\dots=\sigma_n=\mathrm{id}$, a
contradiction. Thus we may assume that $|X_1|>1$. We shall prove
by induction that for all $j\leq m$, there exists $X_{k_j}$ such
that $X_{k_1}=X_1$, the set $\{ X_{k_1},\dots ,X_{k_j}\}$ has
cardinality $j$ and there exist $x_{i_{1,j}},x_{i_{2,j}}\in
X_{k_j}$ such that
\begin{itemize}
\item[(i)] $x_{i_{1,j}}\neq x_{i_{2,j}}$,
\item[(ii)] for all $p<j$, there exist $x_{j_{p,1}},\dots
,x_{j_{p,t_p}}\in X_{k_p}$ such that
$$i_{2,j}=\sigma_{j_{p,1}}\dots \sigma_{j_{p,t_p}}(i_{1,j}).$$
\end{itemize}

For $j=1$, we take $X_{k_1}=X_1$ and we choose any two different
elements $x_{i_{1,1}},x_{i_{2,1}}\in X_1$.

Suppose that $j>1$ and there exist $X_{k_1},\dots ,X_{k_{j-1}}$ such
that $X_{k_1}=X_1$, the set $\{ X_{k_1},\dots ,X_{k_{j-1}}\}$ has
cardinality $j-1$ and there exist $x_{i_{1,j-1}},x_{i_{2,j-1}}\in
X_{k_{j-1}}$ such that
\begin{itemize}
\item[(i')] $x_{i_{1,j-1}}\neq x_{i_{2,j-1}}$,
\item[(ii')] for all $p<j-1$, there exist $x_{j_{p,1}},\dots
,x_{j_{p,t_p}}\in X_{k_p}$ such that
$$i_{2,j-1}=\sigma_{j_{p,1}}\dots \sigma_{j_{p,t_p}}(i_{1,j-1}).$$
\end{itemize}
By Lemma~\ref{key} and Corollary~\ref{identity},
we have that
$$\sigma_{i_{1,j-1}}|_{X_{k_l}}=\sigma_{i_{2,j-1}}|_{X_{k_l}}$$
for all $1\leq l\leq j-1$. Since
$x_{i_{1,j-1}}\neq x_{i_{2,j-1}}$, we have that
$\sigma_{i_{1,j-1}}\neq\sigma_{i_{2,j-1}}$. Thus
there exists $i\in\{ 1,\dots ,n\}$ such that
$\sigma_{i_{1,j-1}}(i)\neq\sigma_{i_{2,j-1}}(i)$.
Let $k_j$ be the integer such that $x_i\in
X_{k_j}$. Clearly the set $\{ X_{k_1},\dots
,X_{k_j}\}$ has cardinality $j$. Let $i_{1,j}=
\sigma_{i_{1,j-1}}(i)$ and $i_{2,j}=
\sigma_{i_{2,j-1}}(i)$. Then
$x_{i_{1,j}},x_{i_{2,j}}\in X_{k_j}$. Let $q$ be
the order of the permutation
$\sigma_{i_{1,j-1}}$, then $$i_{2,j}=
\sigma_{i_{2,j-1}}(i)=\sigma_{i_{2,j-1}}\sigma_{i_{1,j-1}}^{-1}(i_{1,j})=
\sigma_{i_{2,j-1}}\sigma_{i_{1,j-1}}^{q-1}(i_{1,j}).$$
Note that $x_{i_{2,j-1}},x_{i_{1,j-1}}\in
X_{k_{j-1}}$. Thus, if $j=2$, then $(ii)$ is
satisfied. Suppose that $j>2$. Let $1\leq p<
j-1$. By $(ii')$, there exist $x_{j_{p,1}},\dots
,x_{j_{p,t_p}}\in X_{k_p}$ such that
$$i_{2,j-1}=\sigma_{j_{p,1}}\dots
\sigma_{j_{p,t_p}}(i_{1,j-1}).$$ Hence there
exist $j'_1,\dots ,j'_{t_p}\in\{ 1,\dots, n\}$
such that
$$r(x_{j_{p,t_p}},x_{i_{1,j-1}})=(x_{\sigma_{j_{p,t_p}}(i_{1,j-1})},x_{j'_{t_p}})$$
and
$$r(x_{j_{p,t_p-v}},x_{\sigma_{j_{p,t_p-v+1}}\dots
\sigma_{j_{p,t_p}}(i_{1,j-1})})=(x_{\sigma_{j_{p,t_p-v}}\dots
\sigma_{j_{p,t_p}}(i_{1,j-1})},x_{j'_{t_p-v}}),$$
for all $1\leq v\leq t_p-1$. Note that
$x_{j'_{1}},\dots ,x_{j'_{t_p}}\in X_{k_p}$. By
Lemma~\ref{permutat} it follows that
$$\sigma_{j_{p,t_p}}\sigma_{i_{1,j-1}}=\sigma_{\sigma_{j_{p,t_p}}(i_{1,j-1})}\sigma_{j'_{t_p}}$$
and
$$\sigma_{j_{p,t_p-v}}\sigma_{\sigma_{j_{p,t_p-v+1}}\dots
\sigma_{j_{p,t_p}}(i_{1,j-1})}=\sigma_{\sigma_{j_{p,t_p-v}}\dots
\sigma_{j_{p,t_p}}(i_{1,j-1})}\sigma_{j'_{t_p-v}},$$
for all $1\leq v\leq t_p-1$. Hence
\begin{eqnarray*}
\sigma_{j_{p,1}}\dots \sigma_{j_{p,t_p}}\sigma_{i_{1,j-1}}&=&
\sigma_{j_{p,1}}\dots
\sigma_{j_{p,t_p-1}}\sigma_{\sigma_{j_{p,t_p}}(i_{1,j-1})}\sigma_{j'_{t_p}}\\
&=& \sigma_{j_{p,1}}\dots
\sigma_{j_{p,t_p-2}}\sigma_{\sigma_{j_{p,t_p-1}}\sigma_{j_{p,t_p}}(i_{1,j-1})}\sigma_{j'_{t_p-1}}\sigma_{j'_{t_p}}\\
&=& \dots \, =\sigma_{\sigma_{j_{p,1}}\dots\sigma_{j_{p,t_p}}(i_{1,j-1})}\sigma_{j'_{1}}\dots\sigma_{j'_{t_p}}\\
&=& \sigma_{i_{2,j-1}}\sigma_{j'_{1}}\dots\sigma_{j'_{t_p}}\\
&=& \sigma_{j'_{1}}\dots\sigma_{j'_{t_p}}\sigma_{i_{2,j-1}}.
\end{eqnarray*}
Therefore
$$\sigma_{j_{p,1}}\dots \sigma_{j_{p,t_p}}\sigma_{i_{1,j-1}}(i)=\sigma_{j'_{1}}\dots\sigma_{j'_{t_p}}\sigma_{i_{2,j-1}}(i),$$
that is
$$\sigma_{j_{p,1}}\dots \sigma_{j_{p,t_p}}(i_{1,j})=\sigma_{j'_{1}}\dots\sigma_{j'_{t_p}}(i_{2,j}).$$
Hence, if $q_z$ is the order of the permutation $\sigma_{j'_{z}}$,
then we have that
$$i_{2,j}=\sigma_{j'_{1}}^{q_1-1}\dots\sigma_{j'_{t_p}}^{q_{t_p}-1}\sigma_{j_{p,1}}\dots
\sigma_{j_{p,t_p}}(i_{1,j}),$$ and
$$x_{j'_{1}},\dots,x_{j'_{t_p}},x_{j_{p,1}},\dots ,
x_{j_{p,t_p}}\in X_{k_p}.$$ Thus $(ii)$ is satisfied. In particular,
for $j=m$, there exist $X_{k_1},\dots ,X_{k_{m}}$ such that
$X_{k_1}=X_1$, the set $$\{ X_{k_1},\dots ,X_{k_{m}}\}=\{
X_{1},\dots ,X_{m}\}$$ and there exist $x_{i_{1,m}},x_{i_{2,m}}\in
X_{k_{m}}$ such that
\begin{itemize}
\item[(i)] $x_{i_{1,m}}\neq x_{i_{2,m}}$,
\item[(ii)] for all $p<m$, there exist $x_{j_{p,1}},\dots
,x_{j_{p,t_p}}\in X_{k_p}$ such that
$$i_{2,m}=\sigma_{j_{p,1}}\dots \sigma_{j_{p,t_p}}(i_{1,m}).$$
\end{itemize}
By Lemma~\ref{key} and Corollary~\ref{identity}, we have that
$$\sigma_{i_{1,m}}|_{X_{k_l}}=\sigma_{i_{2,m}}|_{X_{k_l}}$$
for all $1\leq l\leq m$, that is
$\sigma_{i_{1,m}}=\sigma_{i_{2,m}}$, a contradiction, therefore
the assertion follows.
\end{proof}

Motivated by Theorem~\ref{abelian}, we define a relation $\rho$ on
$X$ as follows:
$$(x_{i},x_{j})\in \rho \mbox{ if }
x_{i},x_{j}\in X_{k} \mbox{ for some } k \mbox{ and }
\sigma_{i}=\sigma_{j}.$$ This can be used to define a stronger
version of retractability of $(X,r)$, based on the relation
$\rho$. In order to make this idea work, we need some observations
that are similar to those known for the retract relation $\sim$.

Recall that every  set theoretic involutive non-degenerate square
free solution satisfies the so called cyclic condition. This says
that if $r(x_w,x_{j_{1}}) = (x_{j_{2}},x_{w'})$ then there exist
$j_{3},\ldots ,j_{k}$, such that
$$r(x_w,x_{j_{2}})=(x_{j_{3}},x_{w'}), \ldots
,r(x_{w},x_{j_{k}})=(x_{j_{1}},x_{w'}),$$ see
\cite[Corollary~9.2.6]{bookspringer}.

\begin{lemma}\label{rho}
The relation  $\rho$ is an equivalence relation that is compatible
with $r$. That is, if $r(x_i,x_j)=(x_p,x_q)$ and
$r(x_w,x_v)=(x_k,x_l)$ with $(x_i,x_w),(x_j,x_v)\in \rho$, then
$(x_p,x_k),(x_q,x_l)\in\rho$.
\end{lemma}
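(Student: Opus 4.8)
The plan is to recognize $\rho$ as the intersection of two equivalence relations and then to check $r$-compatibility for each of its two defining conditions separately.

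First, note that $\rho = {\sim}\cap\omega$, where $\sim$ is the retract relation ($x_i\sim x_j$ iff $\sigma_i=\sigma_j$) and $\omega$ is the relation of lying in a common $G_r$-orbit ($x_i\,\omega\,x_j$ iff $x_i,x_j\in X_k$ for some $k$). Both are equivalence relations: $\sim$ because it is defined by an equality of permutations, and $\omega$ because the orbits $X_1,\dots,X_m$ partition $X$. Hence $\rho$, being their intersection, is an equivalence relation, which settles the first assertion.

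For compatibility write $r(x_i,x_j)=(x_p,x_q)$ with $p=\sigma_i(j)$ and $q=\gamma_j(i)$, and $r(x_w,x_v)=(x_k,x_l)$ with $k=\sigma_w(v)$ and $l=\gamma_v(w)$; the hypothesis $(x_i,x_w),(x_j,x_v)\in\rho$ says that $\sigma_i=\sigma_w$, $\sigma_j=\sigma_v$, that $x_i,x_w$ share one orbit, and that $x_j,x_v$ share another. The orbit half of the conclusion is immediate from the $G_r$-action on $X$: since $\sigma_i,\sigma_w\in G_r$ the elements $x_p=\sigma_i(x_j)$ and $x_k=\sigma_w(x_v)$ lie in the orbits of $x_j$ and of $x_v$, which coincide, so $x_p\,\omega\,x_k$; and since $G_r=\langle\gamma_1,\dots,\gamma_n\rangle$, the elements $x_q=\gamma_j(x_i)$ and $x_l=\gamma_v(x_w)$ lie in the orbits of $x_i$ and of $x_w$, which coincide, so $x_q\,\omega\,x_l$.

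The substantive point that remains is the pair of permutation equalities $\sigma_p=\sigma_k$ and $\sigma_q=\sigma_l$, i.e.\ that the retract relation $\sim$ is itself $r$-compatible. Because $\rho\subseteq{\sim}$, the hypotheses already give $x_i\sim x_w$ and $x_j\sim x_v$, and the required equalities are exactly the statement that the classical retraction $Ret(X,r)$ recalled in the introduction is well defined; invoking this yields $\sigma_p=\sigma_k$ and $\sigma_q=\sigma_l$. I expect this to be the only real obstacle, since the orbit half is essentially formal. Should a self-contained derivation be preferred, I would argue as follows: Lemma~\ref{permutat} gives $\sigma_p\sigma_q=\sigma_i\sigma_j=\sigma_w\sigma_v=\sigma_k\sigma_l$, and one promotes this product identity to the two factorwise equalities using $r^2=\mathrm{id}$ — which yields $q=\gamma_j(i)=\sigma_p^{-1}(i)$ and $l=\sigma_k^{-1}(w)$ — together with the cyclic condition recalled just above, which is precisely what governs how the right-hand map $\gamma_\bullet$ behaves as the first argument runs through a $\sim$-class. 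Combining the orbit half with $\sigma_p=\sigma_k$ and $\sigma_q=\sigma_l$ gives $(x_p,x_k),(x_q,x_l)\in\rho$, as required.
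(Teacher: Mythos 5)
Your proof is correct, but it takes a different route from the paper's. You decompose $\rho$ as the intersection of the retract relation $\sim$ with the common-orbit relation $\omega$, dispose of the orbit half formally (correctly: $\sigma_i,\gamma_j\in G_r$, so $x_p,x_q$ stay in the orbits of $x_j,x_i$ respectively), and then outsource the permutation half $\sigma_p=\sigma_k$, $\sigma_q=\sigma_l$ to the known well-definedness of $Ret(X,r)$, which the paper does recall from Etingof--Schedler--Soloviev in the introduction, so the citation is legitimate and not circular. The paper instead proves everything from scratch in one chain: it introduces an auxiliary element $w'$ with $r(x_w,x_j)=(x_p,x_{w'})$ (possible because $\sigma_w(j)=\sigma_i(j)=p$), applies Lemma~\ref{permutat} to both $r(x_i,x_j)=(x_p,x_q)$ and $r(x_w,x_j)=(x_p,x_{w'})$ to get $\sigma_{w'}=\sigma_p^{-1}\sigma_w\sigma_j=\sigma_q$, then uses the cyclic condition on $r(x_w,x_j)$ and on $r(x_w,x_v)$ to obtain $\sigma_j(w')=w=\sigma_v(l)$ and hence $l=w'$ (since $\sigma_j=\sigma_v$), which gives $\sigma_l=\sigma_q$ and finally $\sigma_k=\sigma_w\sigma_v\sigma_l^{-1}=\sigma_p$. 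Your approach buys brevity and a cleaner conceptual split; the paper's buys self-containedness. One caution: the ``self-contained derivation'' you sketch as a fallback is not yet a proof --- the product identity $\sigma_p\sigma_q=\sigma_k\sigma_l$ from Lemma~\ref{permutat} cannot be promoted to the two factorwise equalities without first pinning down one of the factors, and that is exactly where the paper's $w'$ device and the cyclic condition do real work; if a self-contained argument were demanded, you would need to supply that intermediate step rather than gesture at it.
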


\begin{proof}
Since $(x_i,x_w)\in \rho$, we have that
$\sigma_w(j)=\sigma_i(j)=p$. Thus there exists $w'$
such that $r(x_w,x_j)=(x_p,x_{w'})$.  Since
$r^2=\mathrm{id}_{X^2}$, it follows that
$$r(x_p,x_q)=(x_i,x_j)\quad \mbox{and}\quad
r(x_p,x_{w'})=(x_w,x_j).$$ Thus $\sigma_p(q)=i$ and
$\sigma_p(w')=w$. Since $(x_i,x_w)\in \rho$, we thus
obtain that $x_q,x_i,x_{w'},x_w$ are in the same
$G_r$-orbit on $X$. By Lemma~\ref{permutat}, it
follows that $\sigma_i\sigma_j=\sigma_p\sigma_q$ and
$\sigma_w\sigma_j=\sigma_p\sigma_{w'}$. Hence, since
$\sigma_i=\sigma_w$, we have that
$$\sigma_{w'}=\sigma_p^{-1}\sigma_w\sigma_j=
\sigma_p^{-1}\sigma_i\sigma_j=\sigma_q.$$ Therefore
$(x_q,x_{w'})\in \rho$. Since
$r(x_w,x_j)=(x_p,x_{w'})$, by the cyclic condition,
there exists $p'$ such that
$r(x_w,x_{p'})=(x_j,x_{w'})$. Thus $\sigma_j(w')=w$.
Since $r(x_w,x_v)=(x_k,x_{l})$, by the cyclic
condition, there exists $k'$ such that
$r(x_w,x_{k'})=(x_v,x_{l})$. Thus $\sigma_v(l)=w$.
Since $(x_j,x_v)\in \rho$, it follows that
$$l=\sigma_v^{-1}(w)=\sigma_j^{-1}(w)=w'.$$
Therefore $(x_q,x_l)\in \rho$. Since $\sigma_i(j)=p$,
$\sigma_w(v)=k$ and $(x_j,x_v)\in \rho$, the elements
$x_j,x_p,x_v,x_k$ are in the same $G_r$-orbit on $X$. The assumption
$r(x_w,x_v)=(x_k,x_l)$ implies, in view of Lemma~\ref{permutat},
that $\sigma_w\sigma_v=\sigma_k\sigma_l$. Since $\sigma_l=\sigma_q$,
$\sigma_w=\sigma_i$, $\sigma_v=\sigma_j$ and
$\sigma_i\sigma_j=\sigma_p\sigma_q$, this  yields that
$$\sigma_k=\sigma_w\sigma_v\sigma_l^{-1}=\sigma_i\sigma_j\sigma_q^{-1}=\sigma_p.$$
Therefore $(x_p,x_k)\in \rho$.
\end{proof}

By Lemma~\ref{rho}, it is easy to see that the map $\bar r\colon
(X/\rho)^2\longrightarrow (X/\rho)^2$, defined by $\bar
r(\overline{x_i},\overline{x_j})=(\overline{x_k},\overline{x_l})$ if
$r(x_i,x_j)=(x_k,x_l)$, yields a set theoretic involutive
non-degenerate square free solution $(X/\rho, \bar r)$. Let
$|X/\rho|=n'$ and let $X/\rho=\{\overline{x_{i_1}},\dots
,\overline{x_{i_{n'}}}\}$. We denote by $y_j$ the element
$y_j=\overline{x_{i_j}}\in X/\rho$. Let
$\sigma'_j\in\mathrm{Sym}_{n'}$ be the permutation defined by $\bar
r(y_j,y_k)=(y_{\sigma'_j(k)},y_l)$. Let $G_{\bar r}$ denote the
group $\langle \sigma'_1,\dots ,\sigma'_{n'} \rangle$.

\begin{lemma}\label{group}
The map $\phi\colon \{ \sigma_1,\dots ,\sigma_n\}\longrightarrow
\{ \sigma'_1,\dots ,\sigma'_{n'}\}$, defined by
$\phi(\sigma_i)=\sigma'_j$ if $\overline{x_i}=y_j$, extends to a
group epimorphism $\widetilde{\phi}\colon G_r\longrightarrow
G_{\bar r}$.
\end{lemma}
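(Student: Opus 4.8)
The plan is to realize the desired extension $\widetilde\phi$ concretely as the permutation homomorphism arising from an action of $G_r$ on the quotient set $X/\rho$. First I would show that the given action of $G_r$ on $X$ (via $\sigma(x_i)=x_{\sigma(i)}$) descends to $X/\rho$; equivalently, that each generator $\sigma_c$ carries $\rho$-classes to $\rho$-classes. This is exactly a specialization of Lemma~\ref{rho}: taking $i=w=c$ (so that $(x_c,x_c)\in\rho$ holds trivially) and $(x_j,x_v)=(x_a,x_b)\in\rho$, the equalities $r(x_c,x_a)=(x_{\sigma_c(a)},x_{\gamma_a(c)})$ and $r(x_c,x_b)=(x_{\sigma_c(b)},x_{\gamma_b(c)})$ give, by Lemma~\ref{rho}, that $(x_{\sigma_c(a)},x_{\sigma_c(b)})\in\rho$. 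Hence $\sigma_c$ maps each $\rho$-class into a single $\rho$-class; since $\sigma_c$ and $\sigma_c^{-1}$ are bijections of $X$ both enjoying this property, $\sigma_c$ in fact permutes the $\rho$-classes. Consequently $G_r$ acts on $X/\rho$, and this action defines a group homomorphism $\Theta\colon G_r\longrightarrow\Sym(X/\rho)$.

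Next I would identify $\Sym(X/\rho)$ with $\Sym_{n'}$ through the enumeration $X/\rho=\{y_1,\dots,y_{n'}\}$, $y_j=\overline{x_{i_j}}$, and check that $\Theta$ recovers $\phi$ on generators. Writing $y_j=\overline{x_c}$ and $y_k=\overline{x_{i_k}}$, the defining rule $\bar r(\overline{x_c},\overline{x_{i_k}})=(\overline{x_{\sigma_c(i_k)}},\overline{x_{\gamma_{i_k}(c)}})$, which is well defined by Lemma~\ref{rho}, shows that $\sigma'_j$ sends the class of $x_{i_k}$ to the class of $x_{\sigma_c(i_k)}$; this is precisely the permutation $\Theta(\sigma_c)$ of $X/\rho$ induced by $\sigma_c$. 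Thus $\Theta(\sigma_c)=\sigma'_j$ whenever $\overline{x_c}=y_j$, i.e. $\Theta$ agrees with $\phi$ on the set $\{\sigma_1,\dots,\sigma_n\}$.

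This simultaneously settles the well-definedness of $\phi$ itself. Although $\rho$ records both the $G_r$-orbit and the value of $\sigma$, so that distinct indices $c,c'$ with $\sigma_c=\sigma_{c'}$ need not satisfy $\overline{x_c}=\overline{x_{c'}}$, the identity $\Theta(\sigma_c)=\sigma'_j$ expresses $\sigma'_j$ purely in terms of the permutation $\sigma_c$ (its induced action on classes). Hence $\sigma_c=\sigma_{c'}$ forces $\sigma'_j=\Theta(\sigma_c)=\Theta(\sigma_{c'})=\sigma'_{j'}$, so $\phi$ is a genuine function. Setting $\widetilde\phi:=\Theta$ then yields a group homomorphism $G_r\longrightarrow G_{\bar r}$ extending $\phi$, the homomorphism property being automatic since $\Theta$ comes from a group action. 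Surjectivity is immediate: each generator $\sigma'_j$ of $G_{\bar r}$ equals $\Theta(\sigma_{i_j})$ for the chosen representative $x_{i_j}$ of $y_j$, so the image of $\widetilde\phi$ contains all generators of $G_{\bar r}$, whence $\widetilde\phi$ is an epimorphism.

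The main obstacle is the first step, namely verifying that $\rho$ is $G_r$-invariant, and the key realization is that this is not a fresh computation but a direct consequence of the compatibility of $\rho$ with $r$ already established in Lemma~\ref{rho}, applied with a repeated first argument. Once the action on $X/\rho$ is in hand, everything else is formal: the homomorphism property follows from the action, while matching with $\phi$, well-definedness, and surjectivity all reduce to unwinding the definitions of $\bar r$ and $\sigma'_j$.
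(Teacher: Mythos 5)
Your proposal is correct and follows essentially the same route as the paper: both define $\widetilde{\phi}(\sigma)$ as the permutation of $X/\rho$ induced by the action of $\sigma$ on $X$, and both reduce the key point (that generators carry $\rho$-classes to $\rho$-classes, which gives well-definedness and the homomorphism identity) to Lemma~\ref{rho}. Your packaging of this as a descended group action on $X/\rho$ is a slightly cleaner way to organize the same computation the paper performs explicitly on generators.
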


\begin{proof}
Let $\sigma\in G_r$. We define
$\widetilde{\phi}(\sigma)$ by
$$\widetilde{\phi}(\sigma)(k)=j,$$ where
$\overline{x_{\sigma(i_k)}}=y_j$. Note that there
exists $l$ such that
$r(x_i,x_{i_k})=(x_{\sigma_i(i_k)},x_l)$. Thus
$\bar
r(\overline{x_i},\overline{x_{i_k}})=(\overline{x_{\sigma_i(i_k)}},\overline{x_l})$.
Hence if $\overline{x_i}=y_p$ then
$\overline{x_{\sigma_i(i_k)}}=y_{\sigma'_p(k)}$.
Therefore
$$\widetilde{\phi}(\sigma_i)=\sigma'_p=\phi(\sigma_i).$$
So, $\widetilde{\phi}$ is an extension of $\phi$.

Let $\sigma\in G_r$. Then
$\widetilde{\phi}(\sigma)(k)=j$, where
$\overline{x_{\sigma(i_k)}}=y_{j}$. Let $i\in \{
1,\dots, n\}$. Then there exists $i'$ such that
$$r(x_{i},x_{i_j})=(x_{\sigma_{i}(i_j)},x_{i'}).$$
Also there exists $i''$ such that
$$r(x_{i},x_{\sigma(i_k)})=(x_{\sigma_{i}\sigma(i_k)},x_{i''}).$$
Since
$\overline{x_{i_j}}=y_j=\overline{x_{\sigma(i_k)}}$,
by Lemma~\ref{rho}, we have that
$\overline{x_{\sigma_i(i_j)}}=\overline{x_{\sigma_i\sigma(i_k)}}$.
Let
$y_{j'}=\overline{x_{\sigma_i(i_j)}}=\overline{x_{\sigma_i\sigma(i_k)}}$.
Then we have
$$\widetilde{\phi}(\sigma_i)(\widetilde{\phi}(\sigma)(k))
=\widetilde{\phi}(\sigma_i)(j)=j'=\widetilde{\phi}(\sigma_i\sigma)(k).$$
Hence
$$\widetilde{\phi}(\sigma_i)\widetilde{\phi}(\sigma)
=\widetilde{\phi}(\sigma_i\sigma).$$ Thus, by
induction on $s$, it is easy to see that
$$\widetilde{\phi}(\sigma_{k_1})\cdots\widetilde{\phi}(\sigma_{k_s})
=\widetilde{\phi}(\sigma_{k_1}\cdots\sigma_{k_s}).$$
Therefore $\widetilde{\phi}$ is an epimorphism of
groups.
\end{proof}

Note that if $i=\sigma_{k_1}\cdots \sigma_{k_s}(i_k)$ and
$\overline{x_i}=y_j$, then
$$j=\widetilde{\phi}(\sigma_{k_1})\cdots\widetilde{\phi}(\sigma_{k_s})(k).$$
Hence if $x_i$ and $x_{u}$ are in the same orbit under the action of
$G_r$ then $\overline{x_i}$ and $\overline{x_u}$ are in the same
orbit under the action of $G_{\bar r}$. Conversely, if $y_j$ and
$y_k$ are in the same orbit under the action of $G_{\bar r}$, then
there exist $k_1,\dots ,k_s$ such that
$$j=\widetilde{\phi}(\sigma_{k_1})\cdots\widetilde{\phi}(\sigma_{k_s})(k)=
\widetilde{\phi}(\sigma_{k_1}\cdots\sigma_{k_s})(k).
$$
Thus
$\overline{x_{i_j}}=\overline{x_{\sigma_{k_1}\cdots\sigma_{k_s}(i_k)}}$,
and therefore $x_{i_j}$ and
$x_{\sigma_{k_1}\cdots\sigma_{k_s}(i_k)}$ are in the same orbit
under the action of $G_r$. Hence $x_{i_j}$ and $x_{i_k}$ are in the
same orbit under the action of $G_r$. So we have proved the
following result.

\begin{lemma}\label{orbits}
The number of orbits of $X$ under the action of $G_r$ is the same
as the number of orbits of $X/\rho$ under the action of $G_{\bar
r}$. Furthermore, if $X_k$ is an orbit of $X$, then $\{
\overline{x_i}\mid x_i\in X_k\}$ is an orbit of $X/\rho$.
\end{lemma}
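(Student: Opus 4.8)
The plan is to show that passing to the quotient $X\to X/\rho$, $x_i\mapsto\overline{x_i}$, carries $G_r$-orbits to $G_{\bar r}$-orbits in a bijective fashion, using the epimorphism $\widetilde\phi\colon G_r\to G_{\bar r}$ from Lemma~\ref{group} to relate the two actions. Concretely, I would prove that two points of $X$ lie in one $G_r$-orbit if and only if their $\rho$-classes lie in one $G_{\bar r}$-orbit; both statements of the lemma then follow at once.

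The first ingredient is an equivariance formula for the representatives $y_k=\overline{x_{i_k}}$: for every $\sigma\in G_r$ one has $\overline{x_{\sigma(i_k)}}=y_{\widetilde\phi(\sigma)(k)}$. For $\sigma=\sigma_i$ this is exactly how $\widetilde\phi(\sigma_i)$ is read off from $\bar r(\overline{x_i},\overline{x_{i_k}})$ in the proof of Lemma~\ref{group}, and for a product $\sigma=\sigma_{k_1}\cdots\sigma_{k_s}$ it follows by induction from the multiplicativity of $\widetilde\phi$; this is precisely the observation recorded immediately after Lemma~\ref{group}.

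Granting this, the forward implication is direct: if $x_{i_k}$ and $x_u$ are in one $G_r$-orbit, write $u=\sigma(i_k)$ with $\sigma\in G_r$; then $\overline{x_u}=y_{\widetilde\phi(\sigma)(k)}$ and $\overline{x_{i_k}}=y_k$ lie in one $G_{\bar r}$-orbit. For the converse I would use surjectivity of $\widetilde\phi$. Suppose $y_j$ and $y_k$ lie in the same $G_{\bar r}$-orbit, so some $\tau'\in G_{\bar r}$ sends $k$ to $j$; choose $\tau=\sigma_{k_1}\cdots\sigma_{k_s}\in G_r$ with $\widetilde\phi(\tau)=\tau'$. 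By the equivariance formula $\overline{x_{\tau(i_k)}}=y_{\tau'(k)}=y_j=\overline{x_{i_j}}$, so $(x_{\tau(i_k)},x_{i_j})\in\rho$. The crucial point is that, by the very definition of $\rho$, any two $\rho$-related elements already lie in a common $G_r$-orbit; hence $x_{\tau(i_k)}$ and $x_{i_j}$ share a $G_r$-orbit, and since $x_{\tau(i_k)}$ and $x_{i_k}$ share one as well, so do $x_{i_j}$ and $x_{i_k}$.

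Putting the two implications together, the projection maps each $G_r$-orbit $X_k$ onto a single $G_{\bar r}$-orbit and separates distinct orbits, so $X_k\mapsto\{\overline{x_i}\mid x_i\in X_k\}$ is a bijection between the $G_r$-orbits of $X$ and the $G_{\bar r}$-orbits of $X/\rho$; this gives both the equality of the numbers of orbits and the description of the image orbits. I expect the only genuine obstacle to be the converse direction, where surjectivity of $\widetilde\phi$ and the inclusion of $\rho$ in the orbit relation must be used together; the equivariance formula itself is a routine consequence of the construction of $\widetilde\phi$.
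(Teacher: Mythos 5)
Your argument is correct and follows essentially the same route as the paper: the equivariance formula $\overline{x_{\sigma(i_k)}}=y_{\widetilde{\phi}(\sigma)(k)}$, the forward implication read off from it, and for the converse a lift of an element of $G_{\bar r}$ to a product of the $\sigma_{k_i}$ combined with the observation that $\rho$-related elements lie, by the definition of $\rho$, in a common $G_r$-orbit. There is no gap.
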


Now the notion of strong retractability of
$(X,r)$ may be defined as follows. First, let
$Ret_{\rho}(X,r)=(X/\rho, \bar{r})$ denote the
induced solution. We say that  $(X,r)$ is
strongly retractable if there exists $m\geq 1$
such that applying  $m$ times the operator
$Ret_{\rho}$ we get a trivial solution.

Since the IYB group corresponding to the solution
$(X/\rho, \bar{r})$ also is abelian by
Lemma~\ref{group} if $G_{r}$ is abelian, the
following is a direct consequence of
Theorem~\ref{abelian}.

\begin{corollary}  \label{retract}
Assume that $(X,r)$ is  a set theoretic involutive non-degenerate
square free solution and the group $G_r$ is abelian. Then $(X,r)$ is
strongly retractable.
\end{corollary}

\section{A solution that is not a generalized twisted union}
\label{Sect3}

In this section we present a counterexample to Conjecture II).
Actually, such a construction can be given already in the case where
the group $G_{r}$ is abelian.

\begin{theorem}
There exists a multipermutation square free solution of level $3$
that it is not a generalized twisted union. Furthermore, the
associated IYB group is abelian.
\end{theorem}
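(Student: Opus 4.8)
The plan is to construct an explicit finite square free solution and verify directly that it has all the required properties. The most natural strategy is to build the solution on a set $X$ that decomposes into two $G_r$-orbits $Y$ and $Z$, designed so that the twisted-union conditions (\ref{twisted3}) and (\ref{twisted4}) fail. First I would specify the set $X$ and give each permutation $\sigma_i$ explicitly (equivalently, write down the map $r$ by listing $r(x_i,x_j)$ for all pairs). To keep the IYB group abelian, I would choose all the $\sigma_i$ to be commuting permutations, for instance products of disjoint transpositions or small commuting cycles supported on disjoint coordinate blocks. The square free requirement forces $\sigma_i(i)=i$ for each $i$, which constrains the choice, and non-degeneracy forces each $\sigma_i$ to be a genuine bijection; both are easy to arrange by inspection.

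The essential verification is that the candidate $(X,r)$ is actually a solution, i.e.\ that $r^2=\mathrm{id}$ and that the Yang-Baxter relation $r_{12}r_{23}r_{12}=r_{23}r_{12}r_{23}$ holds. Rather than checking the braid relation on all triples by brute force, I would exploit the standard reformulation in terms of the $\sigma_i$: a square free non-degenerate involutive $r$ is a solution precisely when the compatibility of Lemma~\ref{permutat} together with the associated matched-pair/cycle conditions are satisfied, which for commuting $\sigma_i$ reduces to a manageable set of identities. Concretely, I would define the $\gamma_i$ by $r(x_i,x_k)=(x_{\sigma_i(k)},x_{\gamma_k(i)})$ and check that the defining relations of a set theoretic solution hold; choosing the $\sigma_i$ from an abelian group acting by a well-understood rule (so that $(X,r)$ arises, for example, from a quotient of a known permutation solution) makes these identities transparent.

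Next I would compute the successive retractions to confirm the multipermutation level is exactly $3$. Here I would apply the retract relation $\sim$ (identifying $x_i\sim x_j$ when $\sigma_i=\sigma_j$): I expect the first retraction $Ret(X,r)$ to collapse $X$ to a smaller nontrivial square free solution, the second retraction to produce a still smaller nontrivial solution, and the third to reach a one-point set, so that $Ret^3(X,r)$ has cardinality $1$ while $Ret^2(X,r)$ does not. Since $G_r$ is abelian, Corollary~\ref{retract} already guarantees that the solution is (strongly) retractable, so the only real work is to pin down that the level is $3$ and not smaller; this is a direct finite computation once the $\sigma_i$ are fixed.

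The main obstacle, and the crux of the example, is arranging that $(X,r)$ is genuinely \emph{not} a generalized twisted union while remaining square free with abelian $G_r$. To do this I would show that for every partition of $X$ into two $G_r$-invariant nonempty subsets $Y$ and $Z$ (equivalently, every way of grouping the orbits into two blocks), at least one of the equalities $\sigma_{\sigma_y(z)\mid Y}=\sigma_{z\mid Y}$ or $\sigma_{\sigma_z(y)\mid Z}=\sigma_{y\mid Z}$ fails for some choice of $y,y'\in Y$, $z,z'\in Z$. The delicate point is that the definition quantifies over \emph{all} admissible partitions, so I must engineer the action of the $\sigma_i$ so that the restriction $\sigma_{\sigma_z(y)}\mid_Z$ genuinely depends on which $y$ is chosen within its orbit, no matter how the orbits are bundled; this is where the level-$3$ complexity is needed, since twisted-union failure cannot occur at level $\le 2$. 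I would exhibit a specific pair of elements and a specific partition-independent obstruction, then tabulate the relevant restricted permutations to confirm the inequality, completing the proof.
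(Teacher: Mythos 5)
There is a genuine gap, in fact two. First, the theorem is an existence statement whose entire mathematical content is the explicit example: your proposal describes \emph{what} an example would have to satisfy and \emph{how} one would check it, but never actually exhibits the set $X$, the permutations $\sigma_i$, or the map $r$. A roadmap of verifications (``I would choose commuting permutations\dots'', ``I would tabulate the restricted permutations\dots'') is not a proof of existence; the hard work is precisely to find data satisfying all the constraints simultaneously, and that work is absent. The paper does this concretely on a $24$-element set with three $G_r$-orbits of size $8$ and fully specified $\sigma_i$.

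Second, and more seriously, your proposed design is provably impossible: you plan to build $X$ as a union of \emph{two} $G_r$-orbits $Y$ and $Z$. But when $G_r$ is abelian, Corollary~\ref{identity} gives $\sigma_{i}\mid_{X_k}=\mathrm{id}_{X_k}$ for every $x_i\in X_k$, and then Lemma~\ref{key} (applied with $X_k=Y$ and with $X_k=Z$) yields $\sigma_{\sigma_y(z)\mid Y}=\sigma_{z\mid Y}$ and $\sigma_{\sigma_z(y)\mid Z}=\sigma_{y\mid Z}$ for all $y\in Y$, $z\in Z$. These are exactly conditions (\ref{twisted3}) and (\ref{twisted4}), so \emph{every} square free solution with abelian $G_r$ and exactly two orbits is automatically a generalized twisted union of its orbits. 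A counterexample with abelian IYB group therefore needs at least three orbits; the paper's argument against the twisted-union property relies on this, observing that with three orbits any invariant partition $X=Y\cup Z$ forces one block to be a single orbit and then refuting each of the three cases by exhibiting specific elements where the restricted permutations differ. Your plan, as stated, cannot be completed.
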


\begin{proof}
Let $X=\{ x_i\mid 1\leq i\leq 24\}$. Consider the following
permutations in $\mathrm{Sym}_{24}$,
\begin{eqnarray*}
\sigma_1&=&\sigma_2=(9,10)(11,12)(13,14)(15,16)(17,18)(19,20)(21,22)(23,24),\\
\sigma_3&=&\sigma_4=(9,11)(10,12)(13,15)(14,16)(17,18)(19,20)(21,22)(23,24),\\
\sigma_5&=&\sigma_6=(9,10)(11,12)(13,14)(15,16)(17,19)(18,20)(21,23)(22,24),\\
\sigma_7&=&\sigma_8=(9,11)(10,12)(13,15)(14,16)(17,19)(18,20)(21,23)(22,24),\\
\sigma_9&=&\sigma_{12}=\sigma_{13}=\sigma_{16}=(1,5)(2,6)(3,7)(4,8)(17,21)(18,22)(19,23)(20,24),\\
\sigma_{10}&=&\sigma_{11}=\sigma_{14}=\sigma_{15}=(1,5)(2,6)(3,7)(4,8)(17,24)(18,23)(19,22)(20,21),\\
\sigma_{17}&=&\sigma_{20}=\sigma_{21}=\sigma_{24}=(9,13)(10,14)(11,15)(12,16)(1,3,2,4)(5,7,6,8),\\
\sigma_{18}&=&\sigma_{19}=\sigma_{22}=\sigma_{23}=(9,16)(10,15)(11,14)(12,13)(1,3,2,4)(5,7,6,8).
\end{eqnarray*}

Consider the map $r\colon X^2\longrightarrow X^2$ defined by
$r(x_i,x_j)=(x_{\sigma_i(j)},x_{\sigma^{-1}_{\sigma_i(j)}(i)})$ for
all $i,j\in X$. Note that $r^2=\mathrm{id}_X$ and
$r(x_i,x_i)=(x_i,x_i)$ for all $x_i\in X$. It is well known that in
order to prove that $(X,r)$ is a square free solution it is
sufficient to check that $\sigma_i\sigma_j=\sigma_k\sigma_l$
whenever $r(x_i,x_j)=(x_k,x_l)$ (see
\cite[Theorem~9.3.10]{bookspringer}). This can be checked by a
direct verification.

Consider the following permutations in $\mathrm{Sym}_{24}$,
\begin{eqnarray*}
\tau_1&=&(9,10)(11,12)(13,14)(15,16),\\
\tau_2&=&(9,11)(10,12)(13,15)(14,16),\\
\tau_3&=&(9,13)(10,14)(11,15)(12,16),\\
\tau_4&=&(9,16)(10,15)(11,14)(12,13),\\
\tau_5&=&(17,18)(19,20)(21,22)(23,24),\\
\tau_6&=&(17,19)(18,20)(21,23)(22,24),\\
\tau_7&=&(17,21)(18,22)(19,23)(20,24),\\
\tau_8&=&(17,24)(18,23)(19,22)(20,21),\\
\tau_9&=&(1,5)(2,6)(3,7)(4,8),\\
\tau_{10}&=&(1,3,2,4)(5,7,6,8).
\end{eqnarray*}
It is easy to check that the subgroups $\langle
\tau_1,\tau_2,\tau_3,\tau_4 \rangle$,
$\langle\tau_5,\tau_6,\tau_7,\tau_8\rangle$ and $\langle
\tau_9,\tau_{10}\rangle$ are abelian. Hence, the group $\langle
\tau_{1},\ldots , \tau_{10}\rangle$ is abelian. As $G_{r}=\langle
\sigma_i\mid 1\leq i\leq 24\rangle \subseteq \langle
\tau_{1},\ldots , \tau_{10}\rangle$, we thus obtain that $G_{r}$
is abelian.

It is easy to see that there are three $G_{r}$-orbits on $X$:
\begin{eqnarray*}
X_1 & = & \{ x_1,x_2,x_3,x_4,x_5,x_6,x_7,x_8\} \\
X_2 & = & \{ x_9,x_{10},x_{11},x_{12},x_{13},x_{14},x_{15},x_{16}\} \\
X_3 & = & \{
x_{17},x_{18},x_{19},x_{20},x_{21},x_{22},x_{23},x_{24}\} .
\end{eqnarray*}
Suppose $(X,r)$ is a generalized twisted union $X=Y\cup Z$. Then $Y$
or $Z$ is equal to one of these orbits. Say, $Y=X_{i}$ for some $i$.
Notice that $\sigma_{17}(1)=3$ and $\sigma_{1 \mid X_{2}\cup X_{3}}
\neq \sigma_{3\mid X_{2}\cup X_{3}}$. This implies that $Y\neq
X_{1}$. Similarly, $\sigma_{1}(9)=10$ and $\sigma_{9\mid X_{1}\cup
X_{3}}\neq\sigma_{10\mid X_{1}\cup X_{3}}$ imply that $Y\neq X_{2}$.
Finally, $\sigma_{1}(17)=18$ and $\sigma_{17\mid X_{1}\cup
X_{2}}\neq \sigma_{18\mid X_{21}\cup X_{2}}$. Therefore $Y\neq
X_{3}$. This contradiction shows that $(X,r)$ is not a generalized
twisted union.

It is easy to verify that applying twice the operator $Ret$ we get
a trivial solution of cardinality $3$. Hence, $(X,r)$ is a
multipermutation solution of level $3$.
\end{proof}

\section{IYB groups generated by cyclic permutations} \label{Sect4}

In this section $(X,r)$ will stand for a set theoretic involutive
non-degenerate square free solution with associated IYB group $G_r$,
such that its generators $\sigma_{i}$, $i=1,\ldots ,n$, are cyclic
permutations.

Our aim is to prove that if $|X|>1$ then $(X,r)$ is a retractable
solution, and moreover it is a generalized twisted union. Hence, we
confirm Conjectures I) and II) in this case.

Recall that (see \cite[Corollary~9.2.6 and Proposition
9.2.4]{bookspringer}) every square free solution satisfies the so
called full cyclic condition. This says that  for any distinct
elements $x$ and $y$ in $X$ there exist distinct elements $x=x_1,\,
x_2,\dots , x_k$ and distinct $y=y_1,\, y_2,\dots, y_p$ in $X$ such
that
\begin{eqnarray*}
r(y_1,x_1)=(x_2,y_2),\;
r(y_1,x_2)=(x_3,y_2),\;\ldots ,\,
r(y_1,x_k)=(x_1,y_2),\\ r(y_2,x_1)=(x_2,y_3),\;
r(y_2,x_2)=(x_3,y_3),\;\ldots ,\,
r(y_2,x_k)=(x_1,y_3),\\
\vdots\hphantom{xxxxxxxxxxxxxxxxxxxxxxx}\\
r(y_p,x_1)=(x_2,y_1),\;
r(y_p,x_2)=(x_3,y_1),\;\ldots ,\,
r(y_p,x_k)=(x_1,y_1).
\end{eqnarray*}

Let $X_1,\dots ,X_m$ be the distinct orbits of $X$ under the action
of $G_r$. We denote by $r_j$ the restriction of $r$ on $X_j^2$. Note
that $(X_j,r_j)$ also is a set theoretic involutive non-degenerate
square free solution with associated IYB group $G_{r_j}$, such that
its generators $\sigma_{i\mid X_j}$, for $x_i\in X_j$, are cyclic
permutations.

\begin{lemma}\label{key2}
Let $x_{i_1}\in X_i$. If $\sigma_{i_1\mid X_k}\neq
\mathrm{id}_{X_k}$ for some $k$, then
$$\sigma_{j\mid X_k}\neq\mathrm{id}_{X_k},$$
for all $x_{j}\in X_i$. Furthermore, $\sigma_{i_1},\sigma_j$ are
conjugate elements in $G_r$ for all $x_j\in X_i$.
\end{lemma}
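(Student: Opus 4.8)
The plan is to establish the conjugacy claim first and to deduce the statement about restrictions to $X_k$ from it. The key preliminary observation is that, since every generator $\sigma_i$ is a cyclic permutation, its support $\supp(\sigma_i)$ is a single orbit of the cyclic group $\langle\sigma_i\rangle$; as each $G_r$-orbit $X_t$ is $\sigma_i$-invariant, hence a union of $\langle\sigma_i\rangle$-orbits, the support $\supp(\sigma_i)$ must lie entirely inside one of the orbits $X_1,\dots,X_m$.

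To prove that $\sigma_{i_1}$ and $\sigma_j$ are conjugate for every $x_j\in X_i$, I would reduce to a single step. Writing $j=\sigma_{p_1}\cdots\sigma_{p_r}(i_1)$ as a word in the generators (possible since $x_{i_1},x_j\in X_i$), it suffices, by induction on $r$ and transitivity of conjugacy, to treat the case $j=\sigma_p(i_1)$ with $j\ne i_1$. Let $q$ be determined by $r(x_p,x_{i_1})=(x_j,x_q)$. Then Lemma~\ref{permutat} gives $\sigma_p\sigma_{i_1}=\sigma_j\sigma_q$, so $\sigma_j=\sigma_p\sigma_{i_1}\sigma_q^{-1}$; everything therefore hinges on the identity $\sigma_q=\sigma_p$, which turns the right-hand side into the genuine conjugate $\sigma_p\sigma_{i_1}\sigma_p^{-1}$. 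Chaining the single steps then produces $\sigma_j=g\sigma_{i_1}g^{-1}$ with $g=\sigma_{p_1}\cdots\sigma_{p_r}\in G_r$.

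The identity $\sigma_q=\sigma_p$ is the crux, and it is here that both the cyclic hypothesis and the full cyclic condition are needed. I would apply the full cyclic condition to the distinct elements $x_{i_1}$ and $x_p$, taking $x_1=x_{i_1}$ and $y_1=x_p$; the first relation $r(y_1,x_1)=(x_2,y_2)$ then forces $x_2=x_j$ and $y_2=x_q$, and the $x$-family $x_1,\dots,x_c$ is precisely the cycle of $\sigma_p$ through $i_1$, that is, all of $\supp(\sigma_p)$. The condition asserts that $\sigma_{y_2}=\sigma_q$ acts on this family as the same $c$-cycle $(x_1\,x_2\,\cdots\,x_c)$ as $\sigma_p$ does. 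Thus $\sigma_q$ and $\sigma_p$ agree on $\supp(\sigma_p)$; since $\sigma_q$ is itself a single cycle that already contains this $c$-cycle as one of its orbits, it can move no further points, and hence $\sigma_q=\sigma_p$.

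Finally, the restriction statement follows painlessly. If $\sigma_{i_1\mid X_k}\ne\mathrm{id}_{X_k}$, then by the opening observation $\supp(\sigma_{i_1})$ is a nonempty subset of $X_k$. For $x_j\in X_i$ write $\sigma_j=g\sigma_{i_1}g^{-1}$ with $g\in G_r$; then $\supp(\sigma_j)=g\bigl(\supp(\sigma_{i_1})\bigr)$, and since $g$ maps the orbit $X_k$ onto itself this is again a nonempty subset of $X_k$, so $\sigma_{j\mid X_k}\ne\mathrm{id}_{X_k}$. I expect the proof of $\sigma_q=\sigma_p$ to be the only real obstacle; the rest is bookkeeping.
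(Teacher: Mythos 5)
Your proposal is correct and follows essentially the same route as the paper: both reduce to a single step $j=\sigma_p(i_1)$, invoke the full cyclic condition together with the hypothesis that the generators are single cycles to conclude $\sigma_q=\sigma_p$ (so that Lemma~\ref{permutat} yields the genuine conjugation $\sigma_j=\sigma_p\sigma_{i_1}\sigma_p^{-1}$), and then chain steps by induction. The only cosmetic difference is that you derive the restriction statement afterwards from conjugacy via supports, whereas the paper notes it at each step; both are valid.
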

\begin{proof}
Let $x_{i_2}\in X_i$ be an element such that
$x_{i_1}\neq x_{i_2}$. Suppose that there exists
$l\in \{ 1,\dots ,n\}$ such that
$\sigma_{l}(i_1)=i_2$. By the full cyclic
condition there exist distinct elements $x_{i_1},
x_{i_2},\dots , x_{i_k}$ and distinct
$x_{l}=x_{l_1},\, x_{l_2},\dots, x_{l_p}$ in $X$
such that
\begin{eqnarray*}
r(x_{l_1},x_{i_1})=(x_{i_2},x_{l_2}),\;
r(x_{l_1},x_{i_2})=(x_{i_3},x_{l_2}),\;\ldots ,\,
r(x_{l_1},x_{i_k})=(x_{i_1},x_{l_2}),\\
r(x_{l_2},x_{i_1})=(x_{i_2},x_{l_3}),\;
r(x_{l_2},x_{i_2})=(x_{i_3},x_{l_3}),\;\ldots ,\,
r(x_{l_2},x_{i_k})=(x_{i_1},x_{l_3}),\\
\vdots\hphantom{xxxxxxxxxxxxxxxxxxxxxxx}\\
r(x_{l_p},x_{i_1})=(x_{i_2},x_{l_1}),\;
r(x_{l_p},x_{i_2})=(x_{i_3},x_{l_1}),\;\ldots ,\,
r(x_{l_p},x_{i_k})=(x_{i_1},x_{l_1}).
\end{eqnarray*}
Since every $\sigma_{p}$ is a cycle, it follows that
$\sigma_{l}=\sigma_{l_1}=\sigma_{l_2}=(i_1,i_2,\dots, i_k)$. From
Lemma~\ref{permutat} we know that
$\sigma_{l_1}\sigma_{i_1}=\sigma_{i_2}\sigma_{l_2}$, thus
$$\sigma_l\sigma_{i_1}\sigma_{l}^{-1}=\sigma_{i_2}.$$
Since $\sigma_{i_1\mid X_k}\neq
\mathrm{id}_{X_k}$, clearly we have that
$\sigma_{i_2\mid X_k}\neq\mathrm{id}_{X_k}.$

Let $x_j\in X_i$ be an element different from $x_{i_1}$. Since
$x_j,x_{i_1}$ are in the same orbit, there exist $j_1,\dots ,j_t\in
\{ 1,\dots ,n\}$ such that
$$\sigma_{j_1}\cdots\sigma_{j_t}(i_1)=j.$$ Now it
is easy to see by induction on $t$ that $\sigma_{i_1},\sigma_j$ are
conjugate elements in $G_r$, and the result follows.\end{proof}

\begin{theorem}\label{cyclic1} $(X,r)$ is
strongly retractable. Moreover, if $|X|>1$ then
$(X,r)$ is a generalized twisted union.
\end{theorem}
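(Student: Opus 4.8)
The plan is to first extract the combinatorial backbone forced by the cyclic hypothesis, then prove strong retractability by induction on $|X|$, and finally exhibit an explicit generalized twisted union.

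First I would record a structural reduction. Each $\sigma_i$ lies in $G_r$, and $G_r$ preserves every orbit $X_k$; since $\sigma_i$ is a single cycle, its support cannot meet two orbits, so $\supp(\sigma_i)\subseteq X_{k}$ for exactly one $k$ (or $\sigma_i=\id$). Combining this with Lemma~\ref{key2}, all generators $\sigma_a$ with $x_a\in X_i$ act nontrivially on one and the same orbit, which I denote $T(X_i)$ (undefined precisely when these generators are trivial). I would also isolate the conjugation identity implicit in Lemma~\ref{key2}: if $\sigma_l(a)=b$ then, by the full cyclic condition and Lemma~\ref{permutat}, $\sigma_b=\sigma_l\sigma_a\sigma_l^{-1}$. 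Two consequences get used repeatedly: if $x_a,x_b$ lie in an orbit $X_j$ and are joined by a generator $\sigma_l$ whose support is an orbit different from $T(X_j)$, then conjugation by $\sigma_l$ fixes $T(X_j)$ pointwise, whence $\sigma_a=\sigma_b$; and if the only generators moving $X_i$ came from $X_i$ itself, then $G_{r_i}=\langle \sigma_{a\mid X_i}\rangle$ would act transitively on $X_i$, contradicting \cite[Theorem~1]{rump} when $|X_i|>1$. In particular a ``pure self-loop'' $T(X_i)=X_i$ with no external orbit acting on $X_i$ is impossible.

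Next, strong retractability by induction on $|X|$. The base case $|X|=1$ is trivial. If $r$ is nontrivial, I claim $\rho$ identifies two distinct points, so that $Ret_{\rho}$ strictly shrinks $X$. Choosing any non-singleton orbit $X_i$, there are three cases according to $T(X_i)$: if its generators are trivial, all points of $X_i$ are $\rho$-related; if $T(X_i)=X_j\neq X_i$, then $|X_i|>1$ forces an external generator to move a point of $X_i$, and the conjugation consequence above yields two distinct points of $X_i$ with equal $\sigma$; and if $T(X_i)=X_i$, then since pure self-loops are excluded the restricted solution $(X_i,r_i)$ is nontrivial, has cyclic generators, and is smaller, so by induction it is strongly retractable, hence its relation $\rho_i$ is nontrivial (for a nontrivial solution it must be, else $Ret_{\rho}$ would be stationary), and any pair it identifies lifts to a $\rho$-pair for $(X,r)$ because the relevant supports lie in $X_i$. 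To iterate I must check that $(X/\rho,\bar r)$ again has cyclic generators: by Lemma~\ref{rho} each $\sigma_i$ permutes the $\rho$-classes, so $\rho$ restricts to a $\sigma_i$-invariant partition of the single cycle $\sigma_i$, and a single cycle induces a single cycle on any invariant partition. Thus $(X/\rho,\bar r)$ satisfies the hypotheses, is smaller, and is strongly retractable by induction, giving the same for $(X,r)$.

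For the generalized twisted union (with $|X|>1$) I would take $Y=X_{j_0}$ to be a single orbit that is not a self-loop, and $Z=X\setminus X_{j_0}$; both are nonempty $G_r$-invariant sets since there are at least two orbits. Such an orbit exists: if every non-singleton orbit were a self-loop, then choosing one, say $X_i$, the exclusion of pure self-loops forces some $X_l\neq X_i$ with $T(X_l)=X_i$, and this $X_l$ is a non-self-loop, a contradiction. For this partition I verify the square-free conditions (\ref{twisted3}) and (\ref{twisted4}). In (\ref{twisted4}) the conjugating element is $\sigma_z$ with support $X_{j_0}$, disjoint from $Z$, so $\sigma_{\sigma_z(y)\mid Z}=\sigma_{y\mid Z}$ for all $y\in Y$, $z\in Z$, needing no further hypothesis. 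In (\ref{twisted3}) the conjugating element is $\sigma_y$ with support $T(X_{j_0})$, disjoint from $Y=X_{j_0}$ precisely because $X_{j_0}$ is not a self-loop, giving $\sigma_{\sigma_y(z)\mid Y}=\sigma_{z\mid Y}$. When $r$ is trivial both conditions hold vacuously, as all $\sigma_i=\id$. The main obstacle throughout is the self-loop orbits, where a generator of an orbit acts nontrivially inside that same orbit and the clean disjoint-support conjugation breaks down; for retractability they are tamed by descending to $(X_i,r_i)$ and invoking induction, while for the twisted union Rump's theorem guarantees they never exhaust all orbits, so a non-self-loop orbit is always available to serve as $Y$.
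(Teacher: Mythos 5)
Your proof is correct, and while it shares the paper's skeleton --- induction on $|X|$, Rump's theorem both to guarantee more than one orbit and to exclude what you call pure self-loops, the conjugation relation $\sigma_{\sigma_l(a)}=\sigma_l\sigma_a\sigma_l^{-1}$ extracted from the proof of Lemma~\ref{key2}, and a single orbit serving as the part $Y$ of the twisted union --- the organization is genuinely different. The paper's induction splits on whether some orbit is a singleton: if so it passes to $X\setminus X_k$; if not it applies Rump's theorem to $(X_1,r_1)$ to find an orbit $X_k$ acting nontrivially on $X_1$ and proves, via a somewhat delicate computation with the full cyclic condition, that \emph{all} generators indexed by $X_k$ coincide, obtaining both the nontriviality of $\rho$ and the twisted union (the latter through Lemma~\ref{key}) inside the induction. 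You instead make explicit that a cyclic generator's support lies in a single orbit, organize everything around the target map $T$, and split on $T(X_i)$; your self-loop case, handled by restricting to $(X_i,r_i)$ and applying the induction hypothesis there, has no counterpart in the paper (whose case split sidesteps self-loops entirely), and your disjoint-support conjugation replaces the paper's ``all $\sigma_l$ with $x_l\in X_k$ are equal'' argument. Your twisted-union proof is moreover non-inductive: any non-self-loop orbit works as $Y$, with both (\ref{twisted3}) and (\ref{twisted4}) following from the same disjoint-support observation. This buys a more transparent structural picture, at the cost of the paper's stronger intermediate conclusion that the chosen orbit has constant $\sigma$. Two harmless imprecisions worth fixing: in (\ref{twisted4}) one has $\supp(\sigma_z)\subseteq X_{j_0}$ only when $\sigma_z$ actually moves the given $y$ (otherwise the condition is vacuous), and your existence argument for a non-self-loop orbit should also record that singleton orbits qualify automatically.
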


\begin{proof}
We shall prove both statements by induction on $|X|=n$. Clearly,
we may assume that $(X,r)$ is a nontrivial solution. In
particular,  $n>2$. Also, we may assume that the result is true
for all solutions of the same type as $(X,r)$ with cardinality
less than $n$.

Let $X_1,\dots ,X_m$ be the different orbits of $X$ under the action
of $G_r$. As mentioned before,  by \cite[Theorem 1]{rump}, $m>1$.

First assume that $|X_k|=1$ for some $k$. Then, clearly, $(X,r)$
is a generalized twisted union of $X_k$ and $X\setminus X_k$.
Moreover, if $X'=X\setminus X_{k}$ and $r'=r_{|X'\times X'}$ then
the solution $(X',r')$ inherits the assumptions on $(X,r)$ and so
it is strongly retractable by the induction hypothesis.

If the solution $(X',r')$ is nontrivial, this means that there
exist $x_{i},x_{j}\in X', i\neq j$, such that
$\sigma_{i|X'}=\sigma_{j|X'}$ and $x_{i},x_{j}$ are in the same
$G_{r'}$-orbit on $X'$. Then $\sigma_{i}=\sigma_{j}$ and
$x_{i},x_{j}$ are in the same $G_{r}$-orbit on $X$, whence $\rho$
is a nontrivial relation.

If $(X',r')$ is a trivial solution then $\sigma_{i|X'}=\id_{|X'}$
for all $x_{i}\in X'$, so that all $\sigma_{i}, x_{i}\in X'$, are
equal. Suppose that $\rho$ is a trivial relation on $X$. This
implies that every $G_{r}$-orbit contained in $X'$ is of
cardinality $1$. But then $(X,r)$ is a trivial solution, a
contradiction. Therefore the relation $\rho$ is nontrivial also in
this case.

As the induced solution $(X/\rho, \bar{r})$
inherits the assumptions on $(X,r)$, by the
induction hypothesis, it follows that it is
strongly retractable. Therefore $(X,r)$ also is
strongly retractable.

Hence, to complete the proof we may assume that $|X_k|>1$ for all
$k=1,\ldots ,n$.

By \cite[Theorem 1]{rump}, the number of $G_{r_1}$-orbits of $X_1$
is greater than $1$. Therefore there exist $x_{i_1},x_{i_2}\in
X_1$ and $x_j\in X\setminus X_1$ such that $x_{i_1}\neq x_{i_2}$
and $\sigma_j(i_1)=i_2$.  Let $k$ be such that $x_j\in X_k$. By
Lemma~\ref{key2}, $\sigma_{l\mid X_1}\neq \mathrm{id}_{X_1}$ for
all $x_l\in X_k$. Thus $\sigma_{l\mid X_k}= \mathrm{id}_{X_k}$ for
all $x_l\in X_k$. We claim that $\sigma_l=\sigma_j$ for all
$x_l\in X_k$.

Let $x_{l_1}\in X_k$. Suppose first that $l_{1}\ne j$ and
$\sigma_u(j)=l_1$ for some $u\in \{ 1,\dots, n\}$. In this case,
there exists $x_v\in X$ such that $r(x_u,x_j)=(x_{l_1},x_v)$. By
the full cyclic condition, $\sigma_v(j)=l_1$. Thus $\sigma_{u\mid
X_k}\neq \mathrm{id}_{X_k}$ and $\sigma_{v\mid X_k}\neq
\mathrm{id}_{X_k}$. From Lemma~\ref{permutat} we know that
$\sigma_{u}\sigma_{j}=\sigma_{l_1}\sigma_{v}$. Since
$$\sigma_{j\mid X_k}=\sigma_{l_1\mid
X_k}= \mathrm{id}_{X_k}$$ and
$\sigma_u,\sigma_v,\sigma_j,\sigma_{l_1}$ are cyclic permutations,
 this implies that $\sigma_u=\sigma_v$ and
$\sigma_j=\sigma_{l_1}$, as desired. Now it is easy to see that
$\sigma_l=\sigma_j$ for all $x_l\in X_k$, as claimed. In
particular, the relation $\rho$ is nontrivial on $X$.

On the other hand, by Lemma~\ref{key}, for all $x_i\in X\setminus
X_k$ and all $x_l\in X_k$, we have that $\sigma_{i\mid
X_k}=\sigma_{\sigma_l(i)\mid X_k}$. Hence $(X,r)$ is a generalized
twisted union of $X_k$ and $X\setminus X_k$.

Moreover, the induced solution $(X/\rho,\bar{r})$
satisfies all the assumptions on $(X,r)$.
Therefore it is strongly retractable by the
induction hypothesis. It follows that $(X,r)$ is
strongly retractable as well. \end{proof}

Notice that there exist solutions of the above type with nonabelian
groups $G_{r}$, see for example \cite[Example~5.4]{gateva-quantum}.
Therefore, Theorem~\ref{cyclic1} confirms Conjecture I) for a class
of solutions not covered by Theorem~\ref{abelian}.

 \vspace{30pt}
 \noindent \begin{tabular}{llllllll}
 F. Ced\'o && E. Jespers  \\
 Departament de Matem\`atiques &&  Department of Mathematics \\
 Universitat Aut\`onoma de Barcelona &&  Vrije Universiteit Brussel  \\
08193 Bellaterra (Barcelona), Spain    &&
Pleinlaan 2, 1050 Brussel, Belgium \\
   &&   \\
J. Okni\'nski &&  \\ Institute of Mathematics &&
\\ Warsaw University&& \\ Banacha 2&& \\ 02-097
Warsaw, Poland &&
\end{tabular}
\end{document}